\theoremstyle{plain}
\newtheorem{thm}{Theorem}[section]
\newtheorem{theorem}[thm]{Theorem}
\newtheorem{lemma}[thm]{Lemma}
\newtheorem{corollary}[thm]{Corollary}
\newtheorem{conjecture}[thm]{Conjecture}
\theoremstyle{definition}
\newtheorem{definition}[thm]{Definition}
\newtheorem{remark}[thm]{Remark}
\newtheorem{problem}[thm]{Problem}
\newtheorem{thevarthm}[thm]{\varthmname}
\newenvironment{varthm*}[1]{\trivlist\item[]{\bf #1.}\it}{\endtrivlist}
\renewcommand\geq{\geqslant}
\renewcommand\leq{\leqslant}
\newcommand\be{\begin{eqnarray*}}
\newcommand\ee{\end{eqnarray*}}
\newcommand\K{\mathbb K}
\renewcommand\P{\mathbb P}
\newcommand\calh{{\mathcal H}}
\newcommand\calf{{\mathcal F}}
\newcommand\fa[2]{\calf_{#1}^{#2}}
\newcommand\ef{{\mathcal{RF}_3^n(1)}}
\newcommand\newop[2]{\def#1{\mathop{\rm #2}\nolimits}}
\newop\log{log}
\newop\ord{ord}
\newop\Gal{Gal}
\newop\SL{SL}
\newop\Bl{Bl}
\newop\mult{mult}
\newop\imult{imult}
\newop\mass{mass}
\newop\Ass{Ass}
\newop\div{div}
\newop\codim{codim}
\newop\sing{sing}
\newop\Zeroes{Zeroes}
\newcommand\wtilde[1]{\widetilde{#1}}
\newcommand\what[1]{\widehat{#1}}
\def\keywordname{{\bfseries Keywords}}%
\def\keywords#1{\par\addvspace\medskipamount{\rightskip=0pt plus1cm
\def\and{\ifhmode\unskip\nobreak\fi\ $\cdot$
}\noindent\keywordname\enspace\ignorespaces#1\par}}
\def\subclassname{{\bfseries Mathematics Subject Classification
(2000)}\enspace}
\def\subclass#1{\par\addvspace\medskipamount{\rightskip=0pt plus1cm
\def\and{\ifhmode\unskip\nobreak\fi\ $\cdot$
}\noindent\subclassname\ignorespaces#1\par}}
\def\endproof{\hspace*{\fill}\endproofsymbol\endtrivlist}
\def\endproofsymbol{\frame{\rule[0pt]{0pt}{6pt}\rule[0pt]{6pt}{0pt}}}
\begin{document}

\author{Grzegorz Malara, Justyna Szpond}
\title{Fermat-type configurations of lines in $\P^3$ and the containment problem}
\date{}
\maketitle
\thispagestyle{empty}

\begin{abstract}
   The purpose of this note is to show a new series of examples of homogeneous ideals $I$ in
   $\K[x,y,z,w]$ for which the containment $I^{(3)}\subset I^2$ fails. These ideals
   are supported on certain arrangements of lines in $\P^3$, which resemble Fermat configurations
   of points in $\P^2$, see \cite{NagSec16}. All examples exhibiting the failure of the containment
   $I^{(3)}\subseteq I^2$ constructed so far have been supported on points or cones over configurations
   of points. Apart from providing new counterexamples, these ideals seem quite interesting
   on their own.

\keywords{containment problem, symbolic powers, arrangements of hyperplanes, intersection lattice, Fermat configurations}
\subclass{14N20 \and 13F20 \and 13C05 \and 14N05}
\end{abstract}

%*****************************************************************************

\section{Introduction}
We study here containment relations between symbolic and ordinary powers of homogeneous ideals.
\begin{definition}[Symbolic power]
   Let $I\subseteq \K[\P^N]$ be a homogeneous ideal. For $m\geq 0$ the $m$-th symbolic power of $I$ is defined as
   \begin{equation}\label{eq:symbolic power}
      I^{(m)}=\bigcap_{P\in\Ass(I)}\left(I^mR_P\cap R\right),
   \end{equation}
   where $\Ass(I)$ is the set of associated primes of $I$.
\end{definition}

\begin{problem}[The containment problem]
   Let $I\subseteq \K[\P^N]$ be a homogeneous ideal. Determine all pairs of non-negative integers $m$, $r$
   such that there is the containment
   \begin{itemize}
      \item[a)] $I^r\subseteq I^{(m)}$;
      \item[b)] $I^{(m)}\subseteq I^r$.
   \end{itemize}
\end{problem}
Part a) of the Containment Problem has an easy answer: The containment holds if and only if $r\geq m$, see
\cite[Lemma 8.4.1]{PSC}.

Part b) has attracted a lot of attention in recent years. Motivated by Swanson's work on the equivalence of adic
and symbolic topologies on noetherian commutative rings \cite{Swa00},
Ein, Lazarsfeld and Smith proved in \cite{ELS01} a ground-breaking result, which in a form convenient for
this work, reads as follows.
\begin{theorem}[Ein, Lazarsfeld, Smith]\label{ELS}
Let $V\subseteq\P^N$ be a subvariety of codimension $e$ and let $I=I(V)$ be its defining ideal. Then
$$I^{(m)}\subseteq I^r$$
holds for all $m\geq re$.
\end{theorem}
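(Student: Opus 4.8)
The plan is to realize the containment as a formal consequence of three properties of the multiplier ideals attached to the graded system of symbolic powers of $I$. Set $\mathfrak a_m=I^{(m)}$; since $I^{(p)}\cdot I^{(q)}\subseteq I^{(p+q)}$, the family $\mathfrak a_\bullet=\{\mathfrak a_m\}_{m\ge 0}$ is a graded system of ideals on $\P^N$, and one forms its asymptotic multiplier ideals $\mathcal J(c\cdot\mathfrak a_\bullet)=\mathcal J\bigl(\tfrac{c}{p}\cdot\mathfrak a_p\bigr)$ for $p$ sufficiently divisible. As symbolic powers are decreasing in $m$, we have $I^{(m)}\subseteq I^{(re)}$ whenever $m\ge re$, so it suffices to prove $I^{(re)}\subseteq I^r$. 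The three ingredients are: (i) $\mathfrak a_k\subseteq\mathcal J(k\cdot\mathfrak a_\bullet)$ for every $k$; (ii) the subadditivity $\mathcal J\bigl((c+c')\cdot\mathfrak a_\bullet\bigr)\subseteq\mathcal J(c\cdot\mathfrak a_\bullet)\cdot\mathcal J(c'\cdot\mathfrak a_\bullet)$; and (iii) $\mathcal J(e\cdot\mathfrak a_\bullet)\subseteq I$. Granting them, iterate (ii) to get $\mathcal J(re\cdot\mathfrak a_\bullet)\subseteq\mathcal J(e\cdot\mathfrak a_\bullet)^{\,r}$ and chain $I^{(re)}=\mathfrak a_{re}\subseteq\mathcal J(re\cdot\mathfrak a_\bullet)\subseteq\mathcal J(e\cdot\mathfrak a_\bullet)^{\,r}\subseteq I^r$.

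For (i) I would combine $\mathfrak a_k^p\subseteq\mathfrak a_{pk}$ with the elementary inclusion $\mathfrak b\subseteq\mathcal J(\mathfrak b)$ (which holds because the relative canonical divisor of a log resolution of a smooth ambient space is effective) and the scaling identity $\mathcal J\bigl(\tfrac1p\cdot\mathfrak b^p\bigr)=\mathcal J(\mathfrak b)$, giving $\mathfrak a_k\subseteq\mathcal J(\mathfrak a_k)=\mathcal J\bigl(\tfrac1p\cdot\mathfrak a_k^p\bigr)\subseteq\mathcal J\bigl(\tfrac1p\cdot\mathfrak a_{pk}\bigr)=\mathcal J(k\cdot\mathfrak a_\bullet)$ for $p$ divisible and large. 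Ingredient (ii) is the one genuinely deep input: it is the subadditivity theorem of Demailly--Ein--Lazarsfeld for (asymptotic) multiplier ideals on a smooth variety, itself resting on the Ohsawa--Takegoshi $L^2$-extension theorem (or its algebraic surrogates), and I would simply invoke it.

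Ingredient (iii) is the heart of the matter and the only place the codimension hypothesis enters. I would check $\mathcal J(e\cdot\mathfrak a_\bullet)\subseteq I$ after localizing at each associated prime $P$ of $I$; as $I$ is radical these are exactly its minimal primes, each of height $e$ since $V$ has pure codimension $e$. At such a $P$ the ring $R_P=\mathcal O_{\P^N,P}$ is regular local of dimension $e$ with maximal ideal $\mathfrak m=IR_P$, and $I^{(m)}R_P=\mathfrak m^m$, so the localized graded system is $\mathfrak m,\mathfrak m^2,\mathfrak m^3,\dots$. Since multiplier ideals commute with localization, $\mathcal J(e\cdot\mathfrak a_\bullet)R_P$ is the asymptotic multiplier ideal of this system at level $e$, namely $\mathcal J(\mathfrak m^e)$; and blowing up the closed point, whose exceptional divisor has discrepancy $e-1$, yields the standard computation $\mathcal J(\mathfrak m^e)=\mathfrak m$. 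Hence $\mathcal J(e\cdot\mathfrak a_\bullet)R_P\subseteq IR_P$ for every associated prime $P$, so $\mathcal J(e\cdot\mathfrak a_\bullet)\subseteq I$, completing the argument. The main obstacle is clearly (ii): it is the sole non-formal step, it is what confines the statement to characteristic zero, and reproving it would require the full analytic or characteristic-$p$ machinery behind it---everything else is bookkeeping with multiplier ideals plus a single blow-up computation.
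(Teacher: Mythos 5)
The paper gives no proof of this theorem --- it is imported verbatim from Ein--Lazarsfeld--Smith \cite{ELS01} --- and your argument is precisely the proof given there: the graded system $\mathfrak a_m=I^{(m)}$, the containment $\mathfrak a_k\subseteq\mathcal J(k\cdot\mathfrak a_\bullet)$, the Demailly--Ein--Lazarsfeld subadditivity theorem, and the local computation $\mathcal J(\mathfrak m^e)=\mathfrak m$ at the minimal primes. Your sketch is correct and correctly identifies subadditivity as the one non-formal input; there is nothing to add.
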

It is natural to wonder to what extent the containment condition in Theorem \ref{ELS} is tight.
This question is meaningful when one asks for \emph{all} ideals with certain properties.
For example, if $I$ is a complete intersection ideal, then $I^{(m)}=I^m$ for all $m\geq 1$ and
thus $I^{(m)}\subseteq I^r$ holds for all $m\geq r$. Prompted by a considerable number of studied examples, the authors in
\cite[Conjecture 8.4.3]{PSC} state the following Conjecture.
\begin{conjecture}\label{conj: codim e}
Let $I$ be a homogeneous ideal with $0\neq I\varsubsetneq \K[\P^N]$ such that $\codim (\Zeroes(I))=e$. Then
$$I^{(m)}\subseteq I^r$$
for $m\geq er-(e-1)$.
\end{conjecture}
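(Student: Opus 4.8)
The statement is phrased as a conjecture, but it is in fact false, and refuting it is the purpose of this paper: already its first nontrivial case --- $e=r=2$, $m=3$, which asserts $I^{(3)}\subseteq I^2$ for every ideal whose zero locus has codimension two --- fails. So in place of a proof I describe the strategy by which one constructs counterexamples.

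The model to keep in mind is the planar Fermat configuration (the dual Hesse arrangement when $n=3$). The $3n$ lines cut out by $F:=(x^n-y^n)(y^n-z^n)(z^n-x^n)$ in $\P^2$ have, besides the three coordinate points through which $n$ of them pass, exactly $n^2$ triple points $[1:\varepsilon:\eta]$ with $\varepsilon^n=\eta^n=1$. Let $I$ be the radical ideal of the set $Z$ of $n^2+3$ points obtained by combining these $n^2$ triple points with the three coordinate points; dropping the coordinate points would make $Z$ a complete intersection, for which $I^{(m)}=I^m$ forces the containment, so they are essential. Since $F$ vanishes to order $\ge 3$ at every point of $Z$, we have $F\in I^{(3)}$; but $F\notin I^2$ --- and here the crude inequality $\deg F\ge 2\alpha(I)$ forced by membership in $I^2$, where $\alpha(I)$ is the least degree of a nonzero form in $I$, turns out to be \emph{satisfied}, so the non-containment has to be established by a direct computation in the degree-$3n$ graded piece of $R/I^2$ or by a refined argument at the singular points. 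This is the Dumnicki--Szemberg--Tutaj-Gasi\'nska counterexample together with its $n$-th root generalizations.

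To break the conjecture in $\P^3$ the plan is to reproduce this mechanism with \emph{lines} rather than points, and --- unlike every counterexample known so far --- not as a cone over a planar configuration. First I would write down a Fermat-type arrangement of lines: choose two or three families of Fermat-type planes in $\K[x,y,z,w]$, for instance $x^n=y^n$, $z^n=w^n$ and a further such family, and let $I$ be the radical ideal of the union of the lines arising as their pairwise intersections, so that $\Zeroes(I)$ is a reduced curve of codimension $e=2$. Next, using that for a reduced curve in characteristic zero $I^{(m)}$ consists exactly of the forms vanishing to order $\ge m$ along each component line, exhibit an explicit form $G$ --- a suitable product of the linear forms (equivalently, of the Fermat polynomials) defining the planes --- vanishing to order $\ge 3$ along every line, so that $G\in I^{(3)}$. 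Finally, compute $\deg G$ and $\alpha(I)$ and prove $G\notin I^2$.

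The decisive difficulty, exactly as in the plane, lies in this last step: $G\in I^2$ is not excluded by the degree inequality $\deg G\ge 2\alpha(I)$, so the failure of containment must be extracted from finer data --- the order of vanishing of $G$ at a point, or along a line, where many components of the arrangement concur, measured against what membership in $I^2$ would force, or an explicit computation in the relevant graded piece of $R/I^2$. A second, combinatorial difficulty feeds into this: the families of planes must be chosen so that the lines of the arrangement have \emph{only} the incidences one expects --- so that the above description of $I^{(m)}$ is valid --- and at the same time so that those incidences pin $\alpha(I)$ at exactly the value that lets $G$ escape $I^2$. Reconciling and verifying these two requirements is the heart of the construction.
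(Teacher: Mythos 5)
You are right that this statement is a conjecture which the paper sets out to refute rather than prove, and your outline of the refutation matches the paper's route: take the Fermat arrangement of $6n$ planes cut out by $F_n=\prod(x_i^n-x_j^n)$ in $\P^3$, let $I_n$ be the radical ideal of the lines of multiplicity at least $3$ in the intersection lattice (the $4n^2$ triple lines \emph{together with} the six coordinate lines --- your observation that the complete-intersection part alone cannot work is exactly why the six $n$-fold lines must be kept), note that $F_n$ vanishes to order $\ge 3$ along each such line so $F_n\in I_n^{(3)}$ by Zariski--Nagata, and then show $F_n\notin I_n^2$.

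The genuine gap is that this last step --- which you yourself flag as ``the decisive difficulty'' and ``the heart of the construction'' --- is never carried out; you only list candidate methods (a degree count, which you correctly note fails; an order-of-vanishing refinement; a computation in a graded piece of $R/I^2$) without committing to one. The paper's argument here is short and completely elementary, and you should supply it. The ideal $I_n$ has six explicit generators $g_1,\dots,g_6$ of degree $2n+2$ (e.g.\ $g_1=(x^n-y^n)(z^n-w^n)xy$), so $F_n\in I_n^2$ would mean $F_n=\sum_{i\le j}h_{i,j}g_ig_j$ with $\deg h_{i,j}=6n-(4n+4)=2n-4$ (whence the hypothesis $n\ge 3$). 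Reducing this identity modulo $(x)$ kills every term involving $g_1,g_3,g_5$, and comparing the coefficient of the monomial $y^{3n}z^{2n}w^n$ on both sides forces the coefficient of $y^{n-2}w^{n-2}$ in $h_{4,4}$ to be $-1$; reducing instead modulo $(z)$ and comparing the coefficient of $x^{2n}y^{3n}w^n$ forces that same coefficient to be $+1$. This contradiction is the entire content of the non-containment, and without it (or an equivalent computation) your proposal establishes only that $F_n\in I_n^{(3)}$, not that the conjecture fails.
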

If $I$ is an ideal of points in $\P^N$, then $e=N$ and the Conjecture predicts that
$$I^{(m)}\subseteq I^r$$
for all $m\geq Nr-(N-1)$. In particular, for $N=2$ and $r=2$, which is the first non-trivial
case, the Conjecture reduces to the question asked by Huneke around $2000$, if there is the containment
\begin{equation}\label{eq: Huneke}
I^{(3)}\subseteq I^2
\end{equation}
for all ideals of points in $\P^2$.

During the past decade or so, a considerable amount of research has been devoted to the containment
\eqref{eq: Huneke}, and more generally to Conjecture \ref{conj: codim e} in the case $I$ is
an ideal of points in $\P^N$, i.e., $e=N$.

By now, for $e=N$ there are a number of counterexamples available, showing that Conjecture \ref{conj: codim e}
was overoptimistic. In positive characteristic, series of counterexamples involving various $r$
and $N$ have been constructed by Harbourne and Seceleanu in \cite{HarSec15}. In characteristic $0$,
the list of counterexamples is much shorter and they all deal with the containment \eqref{eq: Huneke}
in $\P^2$. The first counterexample was announced by Dumnicki, Szemberg and Tutaj-Gasi\'nska in
\cite{DST13}. Further constructions followed in \cite{Real}, \cite{BCH14}, \cite{HarSec15}, \cite{KKMxx}, \cite{LBM15}, \cite{Sec15}.
See \cite{SzeSzp17} for a survey of counterexamples known until now.

In this paper we show the first non-trivial (i.e. not a cone over points in $\P^2$) counterexample in characteristic $0$ to Conjecture
\ref{conj: codim e} in the case $N=3$ and $e=2$, i.e., for an ideal of a $1$-dimensional
   subscheme in $\P^3$. Our main result is the following.

\begin{varthm*}{Main Theorem}
There exists an arrangement of lines in $\P^3$, not all passing through the same point, such that for its defining ideal $I$
one has
$$I^{(3)}\nsubseteq I^2.$$
\end{varthm*}
In fact, we construct a sequence of arrangements of lines satisfying the Main Theorem.

\section{Arrangement of hyperplanes}
   Arrangements of lines, more precisely point sets defined as intersection points
   of arrangement lines, have played a pivotal role in exhibiting counterexamples to Conjecture \ref{conj: codim e}.
It is therefore  not surprising that arrangements of hypersurfaces and higher dimensional flats in which
they intersect, lead to counterexamples based on higher dimensional subvarieties.
   Of course, there is a trivial construction of a cone over a counterexample configuration
   of points in $\P^2$. A non-trivial example requires
   arrangements to be picked much more carefully. We present here a series of such constructions.

We begin this section with introducing some general notation for arrangements of hyperplanes in projective
spaces. Later on we specialize to what we call Fermat-type arrangements.

\begin{definition}[Intersection lattice]
   Let $\calh=\{H_1,\ldots,H_d\}$ be an arrangement of hyperplanes, i.e., a finite set of codimension $1$
   projective subspaces in $\P^N$. The set $L(\calh)$ of all non-empty intersections of hyperplanes in $\calh$
   is the \emph{intersection lattice of} $\calh$.
\end{definition}
   All elements in $L(\calh)$ are projective flats (i.e. projective subspaces).
   For a flat $V\in L(\calh)$ we define the \emph{multiplicity $\mult_{\calh}(V)$
   of $\calh$ along $V$} as the number of hyperplanes in $\calh$ containing $V$.

   The symbol $\calh(k)$ stands for the union of all
   $k$-dimensional flats in $L(\calh)$.
   In particular $\calh(N-1)$ denotes all hyperplanes in the arrangement, and $\calh(0)$
   denotes all points in $L(\calh)$
   We define the numbers $t_j^{\calh}(k)$ as the number of
   $k$-dimensional flats of multiplicity $j$ in $\calh$.

For $k=N-2$, we have the following fundamental combinatorial equality
\begin{equation}\label{eq:combinatorial codim2}
   \binom{d}{2}=\sum_{j\geq 2}^N \binom{j}{2}\cdot t_j^{\calh}(k).
\end{equation}
For $N=3$ there is a combinatorial formula generalizing \eqref{eq:combinatorial codim2},
due to Hunt, see \cite[part II, Section 6.1.1]{Hunt}.
We denote by $t_{pq}^{\calh}$ the number of points of multiplicity $p$ on $q$-fold lines.
Then we have
\begin{equation}\label{eq:Hunt}
   \binom{d}{3}=\sum_{p\geq 3}t_p^{\calh}(0)\binom{p}{3}-\sum_{q\geq 3}\left(\sum_{p\geq q}t_{pq}^{\calh}-t_q^{\calh}(1)\right)\binom{q}{3}.
\end{equation}

\section{Fermat configurations in $\P^2$}
The Fermat arrangement $\fa{2}{n}$ of lines in $\P^2$ for a given $n$ is defined by the equation
$$(x^n-y^n)(y^n-z^n)(z^n-x^n)=0.$$
Thus there are $3n$ lines, which intersect by three in $n^2$ points and there are $3$ additional points
of multiplicity $n$. The non-zero numerical invariants are therefore
$$t_3^{\fa{2}{n}}(0)=n^2  \mbox{ and } t_n^{\fa{2}{n}}(0)=3,$$
see also \cite[Example II.6]{Urz08} for more details.

These arrangements appear in the literature under several names. In the book of Barthel, Hirzebruch and H\"ofer
they are called Ceva arrangements, see \cite[Section 2.3.I]{BHH}. The same terminology is kept in the
   recent book by Tretkoff \cite[Chapter V, 5.2]{Tre16}. Both these beautiful books are focused on
   surfaces of general type arising as ball quotients. In the area of commutative algebra the name
   Fermat arrangement seems more customary, see \cite{NagSec16}.

For $n=3$ we get as a special case the celebrated dual Hesse arrangement with $t_3^{\fa{2}{3}}(0)=12$
and all other invariants vanishing. In the context of the containment problem for
the set of all intersection points of $\fa{2}{n}$ defined by the ideal
$$I_n=(x(y^n-z^n),y(z^n-x^n),z(x^n-y^n))$$
the containment $I_n^{(3)}\subseteq I_n^2$ fails as proved by Harbourne and Seceleanu in \cite[Proposition 2.1]{HarSec15}. A completely different
proof based on a more general theoretical framework has been presented by Seceleanu, see \cite[Proposition 4.2]{Sec15}.
Additionally, ideal-theoretic properties of these configurations have been treated by Nagel and Seceleanu in
\cite{NagSec16}.

\subsection{Fermat configurations in $\P^3$}
In this section we study Fermat arrangements of flats (planes and lines) in $\P^3$.
   Arrangements of this kind are well-known in the literature as they come from
   finite unitary reflection groups, $G(n,n,N)$, see \cite[page 295]{SheTod}. Here we study
   the case $N=3$ and postpone the general case to the forthcoming paper \cite{MSS}.

The Fermat arrangement $\fa{3}{n}$ of planes in $\P^3$ is defined by the vanishing of the polynomial
\begin{equation}\label{eq:poly f}
   F_n(x,y,z,w)=(x^n-y^n)(x^n-z^n)(x^n-w^n)(y^n-z^n)(y^n-w^n)(z^n-w^n).
\end{equation}
There are $6n$ planes in the arrangement. They intersect in triples along $4n^2$ lines and there are $6$
additional lines of multiplicity $n$. Therefore the non-zero invariants of the arrangements $\fa{3}{n}(1)$ are
$$t_2^{\fa{3}{n}}(1)=3n^2,\;\; t_3^{\fa{3}{n}}(1)=4n^2 \; \mbox{ and } \; t_n^{\fa{3}{n}}(1)=6.$$
The six $n$--fold lines of the arrangement are the edges of the coordinate tetrahedron.
   Since we are interested in the third symbolic power of an ideal,
   we restrict our attention to those lines in $\fa{3}{n}(1)$ which have multiplicity at least $3$.
\begin{definition}[The restricted Fermat configuration of lines]
   The \emph{restricted Fermat configuration} $\ef$ of lines in $\P^3$
   is the union of all lines in $\fa{3}{n}(1)$ with multiplicity at least $3$.
\end{definition}

For completeness we list also non-zero dimensional invariants of $\fa{3}{n}(0)$. Using \eqref{eq:Hunt} we have
$$t_6^{\fa{3}{n}}(0)=n^3,\;\;t_{n+1}^{\fa{3}{n}}(0)=6n\;\mbox{  and }\; t_{3n}^{\fa{3}{n}}(0)=4.$$
The four $3n$-fold points in the arrangement are the vertices of the coordinate tetrahedron.

\section{The non-containment result}
We begin by an explicit description of the ideal $I_n$ defining the $6+4n^2$ lines in the restricted Fermat arrangement $\ef$ in $\P^3$.
\begin{lemma}\label{lem: ideal of lines}
The ideal $I_n=I(\ef)$ is generated by
\[
\begin{array}{cc}
g_1=(x^n-y^n)(z^n-w^n)xy, \; & g_2=(x^n-y^n)(z^n-w^n)zw,\\
g_3=(x^n-z^n)(y^n-w^n)xz, \; & g_4=(x^n-z^n)(y^n-w^n)yw,\\
g_5=(x^n-w^n)(y^n-z^n)xw, \; & g_6=(x^n-w^n)(y^n-z^n)yz.
\end{array}
\]
\end{lemma}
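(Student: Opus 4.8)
The strategy is to show the two ideals coincide by a double inclusion, combined with a dimension/degree bookkeeping argument. First I would verify that each generator $g_i$ vanishes on every line of $\ef$; since $\ef$ is the union of the $4n^2$ triple lines $\{x^n=y^n\}\cap\{z^n=w^n\}$-type intersections together with the six coordinate-axis lines, this is a direct check. For instance, $g_1=(x^n-y^n)(z^n-w^n)xy$ vanishes on the coordinate line $\{x=z=0\}$ because of the factor $xy$... wait, more carefully: on the six edges of the coordinate tetrahedron two coordinates vanish, so at least one of the monomial factors $xy$, $zw$, etc. kills each $g_i$, and on a triple line two of the binomial factors $x^n-y^n$ vanish simultaneously; one then checks that each $g_i$ contains at least one of the two relevant binomials. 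The cleanest way to organize this is to note that the six binomials $x^n-y^n,\dots,z^n-w^n$ come in three "complementary pairs" $\{x^n-y^n, z^n-w^n\}$, $\{x^n-z^n,y^n-w^n\}$, $\{x^n-w^n,y^n-z^n\}$, that a triple line is exactly where one complementary pair vanishes, and that $g_{2j-1},g_{2j}$ are built from the $j$-th pair — so every $g_i$ vanishes on every triple line, and the monomial factors handle the coordinate edges. This gives $(g_1,\dots,g_6)\subseteq I_n$.

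For the reverse inclusion I would argue via a primary decomposition / saturation computation. The ideal $I_n=I(\ef)$ is the intersection of the prime ideals of the $6+4n^2$ lines, each of which is generated by two linear or binomial-type forms. I would exhibit $J:=(g_1,\dots,g_6)$ as already saturated and radical with the correct zero locus; concretely, one shows $\Zeroes(J)=\ef$ set-theoretically (every point off $\ef$ has some $g_i$ nonzero — this follows by eliminating variables from the six equations) and then that $J$ is a radical ideal, e.g. by checking it is the intersection of the same primes, or by a Jacobian/smoothness argument on each component. Alternatively, and perhaps more in the spirit of the paper, one can identify $J$ as the ideal generated by the $2\times2$ minors (or an analogous determinantal/"bracket" structure) of a suitable $2\times n$ or $3\times2$ matrix of forms, from which radicality and the primary decomposition follow by standard results on determinantal ideals.

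The main obstacle, I expect, is the reverse inclusion $I_n\subseteq(g_1,\dots,g_6)$ — equivalently, showing $(g_1,\dots,g_6)$ is already radical and saturated, since membership of a given homogeneous form in an explicit ideal is easy to verify but proving the ideal is \emph{all} of $I(\ef)$ requires control of the full primary decomposition. A convenient shortcut is a Hilbert-function or degree count: the six sextic-degree ($6n$ in the variables, i.e.\ degree $2n+2$) generators cut out a scheme whose degree can be computed and matched against $\deg \ef = 6+4n^2$; if the codimension-$2$ component of $\Zeroes(g_1,\dots,g_6)$ has the right degree and no embedded or lower-dimensional components interfere, the two ideals agree. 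I would carry the steps in the order: (1) complementary-pair bookkeeping and the inclusion $J\subseteq I_n$; (2) set-theoretic equality $\Zeroes(J)=\ef$ by elimination; (3) radicality of $J$, via either a determinantal description or a direct check that $J$ equals the intersection of the component primes; (4) conclude $J=I_n$.
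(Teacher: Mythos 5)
There is a genuine gap, and it begins with a mischaracterization of the configuration. The $4n^2$ triple lines of $\fa{3}{n}$ are \emph{not} the ``complementary pair'' intersections $\{x^n=y^n\}\cap\{z^n=w^n\}$: those loci are the $3n^2$ \emph{double} lines (exactly two arrangement planes contain a line $\{x=\eps y,\ z=\eps' w\}$), and they are excluded from $\ef$. The triple lines are the lines along which three of the four coordinates have equal $n$-th powers, e.g.\ $x^n=y^n=z^n$ with $w$ free; equivalently, they are the components of $\Zeroes\bigl((x^n-y^n)(z^n-w^n)\bigr)\cap\Zeroes\bigl((x^n-z^n)(y^n-w^n)\bigr)$. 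Under your description step (1) would in fact fail: $g_3=(x^n-z^n)(y^n-w^n)xz$ does not vanish on the line $\{x=y,\ z=w\}$ (evaluate at $(1:1:2:2)$). The correct reason every $g_i$ vanishes on every triple line is different from the one you give: each $g_i$ carries a \emph{full} complementary pair of binomial factors, and any three-element subset of $\{x,y,z,w\}$ contains one of the two pairs $\{x,y\},\{z,w\}$ (resp.\ $\{x,z\},\{y,w\}$; resp.\ $\{x,w\},\{y,z\}$), so on each triple line at least one binomial factor of each $g_i$ vanishes.

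The second, larger gap is the reverse inclusion, which you rightly flag as the main obstacle but do not actually close. Matching the degree of the codimension-two part of $\Zeroes(g_1,\dots,g_6)$ against $6+4n^2$ does not exclude embedded or non-reduced structure; a Jacobian/smoothness check cannot certify radicality here because the union of lines is genuinely singular at its many intersection points; and the determinantal description is only conjectured. The paper avoids ever having to prove that $(g_1,\dots,g_6)$ is saturated and radical: it observes that the ideal of the $4n^2$ triple lines is the complete intersection $J_n=\bigl((x^n-y^n)(z^n-w^n),\,(x^n-z^n)(y^n-w^n)\bigr)$ --- so no saturation or radicality issue arises for that piece --- and then obtains $I_n$ as the explicit intersection $J_n\cap(x,y)\cap(x,z)\cap(x,w)\cap(y,z)\cap(y,w)\cap(z,w)$, from which the six generators are read off. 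To repair your plan, replace steps (2)--(4) by this intersection computation (or by a comparison of the Hilbert functions of $(g_1,\dots,g_6)$ and of that intersection in every degree), and correct the identification of the triple lines in step (1).
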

\begin{proof}
   The ideal $J_n$ of the $4n^2$ triple lines is a complete intersection ideal
   $$J_n=((x^n-y^n)(z^n-w^n), (x^n-z^n)(y^n-w^n)).$$
   Since
   $$(x^n-w^n)(y^n-z^n)=(x^n-z^n)(y^n-w^n)-(x^n-y^n)(z^n-w^n),$$
   it is clear that for every line in the set of lines defined by $J_n$,
   there are three planes from $\calf_3^n(2)$ vanishing along this line.

   Furthermore, it is clear that, for example, any plane defined by the vanishing
   of $(x^n-y^n)$ belongs to the pencil of planes vanishing along the line $(x,y)$.
   Thus there are $n$ planes in the arrangement, which vanish along the $(x,y)$ line.
   The same holds, by symmetry, for an arbitrary pair of variables, i.e., any
   other coordinate line.

   The ideal of the union of $4n^2$ triple lines and the $6$ coordinate lines is then defined by
   $$I_n=J_n\cap(x,y)\cap(x,z)\cap(x,w)\cap(y,z)\cap(y,w)\cap(z,w).$$
   The generators of $I_n$ can then be easily read off of this presentation.
\end{proof}
   From the Zariski-Nagata Theorem, see \cite[Theorem 3.14]{Eisenbud} we obtain immediately
\begin{corollary}\label{cor:f in I3}
   The polynomial $F_n$ in \eqref{eq:poly f} is an element of $I_n^{(3)}$.
\end{corollary}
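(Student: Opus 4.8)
The plan is to derive the corollary straight from the Zariski--Nagata theorem, making explicit what is hidden behind the word ``immediately''. Recall from \eqref{eq:poly f} that $F_n=\prod_{i=1}^{6n}L_i$ is a product of $6n$ pairwise non-proportional linear forms, one for each plane of $\fa{3}{n}$. The Zariski--Nagata theorem, in the form cited above, says that for the radical ideal $I_n=I(\ef)$ one has $G\in I_n^{(3)}$ precisely when $G$ vanishes to order at least $3$ at every point of $\ef$; since $\ef$ is equidimensional it is enough to check that $F_n$ vanishes to order $\ge 3$ along each irreducible component $\ell$ of $\ef$, i.e. that $F_n\in\mathfrak p_\ell^{3}$ for the ideal $\mathfrak p_\ell=I(\ell)$ of each component (note $\mathfrak p_\ell^{(3)}=\mathfrak p_\ell^{3}$ since $\mathfrak p_\ell$ is generated by two linear forms, hence a complete intersection, as recalled in the introduction).

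The remaining point is a short count. Because $F_n$ is a product of pairwise non-proportional linear forms, the order of vanishing of $F_n$ along a line $\ell$ equals the number of the $L_i$ that lie in $\mathfrak p_\ell$, which is exactly the multiplicity $\mult_{\fa{3}{n}}(\ell)$ of $\ell$ in the plane arrangement. By the definition of the restricted Fermat configuration, every component $\ell$ of $\ef$ has $\mult_{\fa{3}{n}}(\ell)\ge 3$; concretely these components are the $4n^2$ triple lines cut out by the ideal $J_n$ of Lemma~\ref{lem: ideal of lines}, of multiplicity exactly $3$, together with the six edges of the coordinate tetrahedron, of multiplicity $n\ge 3$ (for $\{z=w=0\}$, say, precisely the $n$ factors of $z^n-w^n$ vanish), as recorded by $t_3^{\fa{3}{n}}(1)$ and $t_n^{\fa{3}{n}}(1)$. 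Hence $\ord_{\mathfrak p_\ell}(F_n)\ge 3$ for every such $\ell$, which is what was needed. If one prefers the differential formulation of Zariski--Nagata, the same fact reads: every partial derivative of $F_n$ of order $\le 2$ still contains, in each Leibniz term, at least one of the $\ge 3$ factors from $\mathfrak p_\ell$, hence lies in $\mathfrak p_\ell$.

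I do not expect a real obstacle: the proof is essentially bookkeeping. The two places calling for a little care are citing Zariski--Nagata in a form that lets one test membership in $I_n^{(3)}$ component by component via orders of vanishing, and checking that the two families of components of $\ef$ have the multiplicities $3$ and $n$ claimed above --- but the latter is already contained in the computation of the invariants of $\fa{3}{n}(1)$, and the inequality $n\ge 3$ needed for the coordinate edges is automatic, since those edges belong to $\ef$ only when their multiplicity $n$ is at least $3$.
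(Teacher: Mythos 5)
Your argument is correct and follows exactly the route the paper intends: the paper derives the corollary ``immediately'' from the Zariski--Nagata theorem, and you have simply made explicit the underlying bookkeeping, namely that every component of $\ef$ lies on at least three of the $6n$ planes whose linear forms factor $F_n$, so $F_n$ vanishes to order at least $3$ along each component. The details you supply (including the multiplicity count for the triple lines and the coordinate edges) are accurate and consistent with the invariants recorded for $\fa{3}{n}(1)$.
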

   The polynomial $F_n$ is exactly the reason for the non-containment in the Main Theorem.
   We state now a Theorem from which the Main Theorem follows immediately.
\begin{theorem}\label{thm:main true}
   For an arbitrary integer $n\geq 3$ and $I_n$ the ideal of lines
   in the restricted Fermat arrangement $\ef$ the containment
   $$I_n^{(3)}\subset I_n^2$$
   fails.
\end{theorem}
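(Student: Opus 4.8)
Since $F_n\in I_n^{(3)}$ by Corollary~\ref{cor:f in I3}, the task is to show $F_n\notin I_n^2$, and I would argue by contradiction. Assume $F_n\in I_n^2$. As $\deg F_n=6n$ and each generator $g_i$ of Lemma~\ref{lem: ideal of lines} has degree $2n+2$, one can write
\[
F_n=\sum_{1\le i\le j\le 6}h_{ij}\,g_ig_j,\qquad h_{ij}\in\K[x,y,z,w]_{2n-4};
\]
this is also where the hypothesis $n\ge3$ is used — the coordinate lines belong to $\ef$ precisely because their multiplicity $n$ is at least $3$. The plan is to feed into this identity the two factorizations that the list of generators makes transparent. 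Set
\[
f_1=(x^n-y^n)(z^n-w^n),\quad f_2=(x^n-z^n)(y^n-w^n),\quad f_3=(x^n-w^n)(y^n-z^n).
\]
Then $f_1-f_2+f_3=0$, $F_n=f_1f_2f_3$, and every generator is of the form $g_i=f_{k(i)}m_i$, where $(m_1,\dots,m_6)=(xy,zw,xz,yw,xw,yz)$ exhausts the six square-free quadratic monomials and $k(1)=k(2)=1$, $k(3)=k(4)=2$, $k(5)=k(6)=3$.

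The next step is to collapse the whole identity onto the three monomials $f_1^2,f_1f_2,f_2^2$. Using $f_3=f_2-f_1$, each product $f_{k(i)}f_{k(j)}$ is an explicit integer combination of $f_1^2,f_1f_2,f_2^2$, so substituting turns the displayed identity into
\[
F_n=A\,f_1^2+B\,f_1f_2+C\,f_2^2,
\]
where $A,B,C$ are the induced linear combinations of the $h_{ij}m_im_j$ (all homogeneous of degree $2n$). On the other hand $F_n=f_1f_2f_3=f_1f_2^2-f_1^2f_2$, so $(A+f_2,\,B,\,C-f_1)$ is a syzygy of $(f_1^2,f_1f_2,f_2^2)$. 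Since $f_1,f_2$ is a regular sequence — their irreducible factors $x-\zeta y,\,z-\zeta w$, respectively $x-\zeta z,\,y-\zeta w$, have no common factor — the syzygies of $(f_1^2,f_1f_2,f_2^2)$ are generated by the Koszul relations $(f_2,-f_1,0)$ and $(0,f_2,-f_1)$; comparing degrees (everything is homogeneous of degree $2n=\deg f_1=\deg f_2$) forces the syzygy coefficients to be scalars $\lambda,\mu\in\K$, giving
\[
A=(\lambda-1)f_2,\qquad B=-\lambda f_1+\mu f_2,\qquad C=(1-\mu)f_1 .
\]

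The contradiction is then extracted by three specializations, using only bookkeeping of which pairs $(i,j)$ actually contribute to $A$, $B$ and $C$. A term $g_ig_j$ contributes to $B$ exactly when $f_{k(i)}f_{k(j)}\ne f_1^2,f_2^2$, that is, when $\{k(i),k(j)\}$ is neither $\{1\}$ nor $\{2\}$; the monomials $m_im_j$ excluded in this way are precisely the powers of $xy$, of $zw$ and of $xz$, so every monomial occurring in $B$ is divisible by $z$ or $w$ and also by $y$ or $w$. Setting $z=w=0$ yields $0=B|_{z=w=0}=\mu\,x^ny^n$, hence $\mu=0$; setting $y=w=0$ yields $0=B|_{y=w=0}=-\lambda\,x^nz^n$, hence $\lambda=0$. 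At the same time, among all $m_im_j$ with $i\le j$ the only one containing neither $y$ nor $z$ is $m_5^2=x^2w^2$, and the summand $h_{55}g_5^2$ contributes $h_{55}\,x^2w^2$ both to $A$ (since the $f_1^2$-part of $f_3^2=f_1^2-2f_1f_2+f_2^2$ has coefficient $1$) and to $C$ (since so does the $f_2^2$-part), while all other contributions to $A$ and to $C$ vanish at $y=z=0$. Hence
\[
h_{55}\big|_{y=z=0}\cdot x^2w^2=A\big|_{y=z=0}=(\lambda-1)\,f_2\big|_{y=z=0}=-(\lambda-1)\,x^nw^n,
\]
and likewise $h_{55}|_{y=z=0}\cdot x^2w^2=C|_{y=z=0}=-(1-\mu)\,x^nw^n$, so $\lambda+\mu=2$. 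Combined with $\lambda=\mu=0$ this is absurd; therefore $F_n\notin I_n^2$, which proves the theorem, and with it the Main Theorem.

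The only genuinely delicate point is spotting this reformulation at the outset: once one notices that $F_n=f_1f_2f_3$, that each $g_i=f_{k(i)}m_i$, and that $f_1-f_2+f_3=0$ forces everything into the three-dimensional span of $f_1^2,f_1f_2,f_2^2$, the remainder is a short finite check of which quadratic monomials survive the substitutions $z=w=0$, $y=w=0$ and $y=z=0$. A more geometric alternative would be to restrict the hypothetical identity to the coordinate plane $\{x=0\}$, where $I_n$ degenerates to $yzw$ times a "twisted" Fermat ideal and one is pushed toward the known non-containment for the Fermat configuration $\fa{2}{n}$ in $\P^2$ of Harbourne and Seceleanu; but the direct computation above is shorter and self-contained.
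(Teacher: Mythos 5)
Your proof is correct, but it takes a genuinely different route from the paper's. The paper starts from the same two ingredients (Corollary \ref{cor:f in I3} and the ansatz \eqref{eq:f in I2} with $\deg h_{i,j}=2n-4$), but then reduces the identity modulo $(x)$ and modulo $(z)$ and hunts for a single monomial: isolating $y^{3n}z^{2n}w^n$ in \eqref{eq:mod x} forces the coefficient of $y^{n-2}w^{n-2}$ in $h_{4,4}$ to be $-1$, while isolating $x^{2n}y^{3n}w^n$ in \eqref{eq:mod z} forces it to be $+1$. You instead exploit the factorization $g_i=f_{k(i)}m_i$ together with the linear relation $f_1-f_2+f_3=0$, collapse the identity onto the basis $f_1^2,f_1f_2,f_2^2$, and use that for the regular sequence $f_1,f_2$ the syzygies of $(f_1,f_2)^2$ are Koszul; degree reasons reduce everything to two scalars, which are killed by restricting to the coordinate lines $\{z=w=0\}$, $\{y=w=0\}$, $\{y=z=0\}$. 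I checked the details: $f_1,f_2$ have no common irreducible factor (their factors involve disjoint variable pairs), the fifteen products $m_im_j$ that actually occur in $B$ are each divisible by $z$ or $w$ and by $y$ or $w$, and only $h_{5,5}m_5^2$ survives $y=z=0$ in $A$ and in $C$. One small inaccuracy: the products excluded from $B$ are $x^2y^2$, $xyzw$, $z^2w^2$, $x^2z^2$, $y^2w^2$, not merely ``powers of $xy$, $zw$, $xz$''; this does not affect your conclusion, which holds by direct inspection of the terms that do occur. Note that both proofs implicitly need $2\neq 0$ in $\K$ (yours ends with $\lambda+\mu$ equal to both $0$ and $2$, the paper's with $-1=+1$). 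What your approach buys is a structural explanation of the failure via the syzygy module of $(f_1,f_2)^2$, replacing the somewhat delicate monomial bookkeeping; the paper's coefficient-chasing is more elementary and is the template the authors reuse for the higher-codimension generalizations in \cite{MS17}. Your closing remark about restricting to $\{x=0\}$ is in fact essentially what the paper does, following \cite{HarSec15}.
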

\proof
   We know from Corollary \ref{cor:f in I3} that the polynomial $F_n(x,y,z,w)$
   is contained in $I_n^{(3)}$. We will show that it is not contained in $I_n^2$.
   To this end, we parrot to some extent the proof for the dual Hesse arrangement from \cite[Theorem 2.2]{DST13}
   and for Fermat arrangements $\fa{2}{n}$ from \cite[Proposition 2.1]{HarSec15}.

   Keeping the notation from Lemma \ref{lem: ideal of lines}, we assume
   to the contrary that $F_n\in I_n^2$. Then there are homogeneous polynomials $h_{i,j}$ for $1\leq i\leq j\leq 6$
   of degree $2n-4$ (this count is the reason for the assumption $n\geq 3$) such that
   \begin{equation}\label{eq:f in I2}
      F_n=\sum_{1\leq i\leq j\leq 6}h_{i,j}g_ig_j.
   \end{equation}
   Taking the identity \eqref{eq:f in I2} modulo $(x)$ we obtain
   %\begin{equation}
   \begin{eqnarray}\label{eq:mod x}
      -y^nz^nw^n(y^n-z^n)(y^n-w^n)(z^n-w^n) & =  y^{2n}z^2w^2(z^n-w^n)^2\wtilde{h}_{2,2} \nonumber\\
      & + z^{2n}y^2w^2(y^n-w^n)^2\wtilde{h}_{4,4} \nonumber\\
      & + w^{2n}y^2z^2(y^n-z^n)^2\wtilde{h}_{6,6} \nonumber\\
      & + y^{n+1}z^{n+1}w^2(y^n-w^n)(z^n-w^n)\wtilde{h}_{2,4} \nonumber\\
      & + y^{n+1}w^{n+1}z^2(y^n-z^n)(z^n-w^n)\wtilde{h}_{2,6} \nonumber\\
      & + z^{n+1}w^{n+1}y^2(y^n-z^n)(y^n-w^n)\wtilde{h}_{4,6}.\nonumber \\
   \end{eqnarray}%\end{equation}
   We write here $\wtilde{f}$ to indicate the residue class of a polynomial $f\in\K[x,y,z,w]$
   modulo $(x)$.

   Now we look at the coefficient of the monomial $y^{3n}z^{2n}w^n$ in \eqref{eq:mod x}.
   On the left hand side of the equation this coefficient is $-1$. On the right hand side
   of the equation, the monomial $y^{3n}z^{2n}w^n$ can be obtained only from the second
   summand as a summand in the product
   $$y^{2n+2}z^{2n}w^2\cdot \wtilde{h}_{4,4}.$$
   Taking this for granted for a while, this shows that the coefficient of the monomial $y^{n-2}w^{n-2}$ in $\wtilde{h}_{4,4}$,
   and hence also in $h_{4,4}$ is $-1$.

   Turning to the occurrence of the monomial $y^{3n}z^{2n}w^n$ on the right hand side of \eqref{eq:mod x},
   note to begin with that its coefficient must be zero in the first summand. Indeed, already in the
   product $y^{2n}z^2w^2(z^n-w^n)^2$ either the power of $z$ or the power of $w$ is too large, i.e.,
   exceeds corresponding powers of $z$ and $w$ in $y^{3n}z^{2n}w^n$. Similarly, in the third, fifth and sixth
   summand the powers of $w$ are too large, whereas in the fourth summand either the power of $z$ or that of $w$
   are too large.

   The idea now is to consider \eqref{eq:f in I2} modulo $(z)$ and identify the coefficient of the monomial
   $y^{n-2}w^{n-2}$ in $h_{4,4}$ as $1$, which gives clearly a contradiction.

   Turning to the details, \eqref{eq:f in I2} modulo $(z)$ gives
   %\begin{equation}
   \begin{eqnarray}\label{eq:mod z}
      -x^ny^nw^n(x^n-y^n)(x^n-w^n)(y^n-w^n)
      & = & w^{2n}x^2y^2(x^n-y^n)^2\what{h}_{1,1} \nonumber\\
      & + & x^{2n}y^2w^2(y^n-w^n)^2\what{h}_{4,4} \nonumber\\
      & + & y^{2n}x^2w^2(x^n-w^n)^2\what{h}_{5,5} \nonumber\\
      & - & x^{n+1}w^{n+1}y^2(x^n-y^n)(y^n-w^n)\what{h}_{1,4} \nonumber\\
      & - & y^{n+1}w^{n+1}x^2(x^n-y^n)(x^n-w^n)\what{h}_{1,5} \nonumber\\
      & + & x^{n+1}y^{n+1}w^2(x^n-w^n)(y^n-w^n)\what{h}_{4,5}.\nonumber\\
   \end{eqnarray}
   %\end{equation}
   We write now $\what{f}$ to indicate the residue class of a polynomial $f\in\K[x,y,z,w]$
   modulo $(z)$. This time it is the monomial $x^{2n}y^{3n}w^n$ which appears with the coefficient
   $+1$ on the left hand side of \eqref{eq:mod z} and which can appear only in the second
   summand on the right hand side of \eqref{eq:mod z}. The argument is exactly the same
   as for the modulo $(x)$ case, with necessary adjustment of used variables. The conclusion
   now is that the coefficient of the monomial $y^{n-2}w^{n-2}$ of $h_{4,4}$ is $+1$.
   This contradiction shows that \eqref{eq:f in I2} cannot hold, which in turns means
   that $F_n\notin I_n^2$ as asserted.
\endproof
\section{Higher dimensional generalizations}
There is no reason to restrict this construction to $\P^3$. In $\P^N$, we define degree $n$ Fermat
arrangement $\fa{N}{n}$ of hyperplanes as given by the zero-locus of the polynomial
$$F_n(x_0,\ldots,x_N)=\prod_{0\leq i<j\leq N}(x_i^n-x_j^n).$$
\begin{remark}
   For $n=1$ we obtain the well-known braid arrangement \cite[Example 1.3]{Sta04}. From this point of view, our construction can be
   also viewed as higher order braid arrangements.
\end{remark}
   In \cite{MS17} we show that in codimension $2$ for $I=I(N,n)=I({\mathcal R}\fa{N}{n}(N-2))$, the containment
   $$I^{(3)}\subseteq I^2$$
   still fails.
   On the other hand, there are several interesting algebraic properties of ideals
   $I(\fa{N}{n}(k))$, which we study in the forthcoming article \cite{MSS}.

\paragraph*{Acknowledgement.}
   Research of Szpond was partially supported by National Science Centre, Poland, grant
   2014/15/B/ST1/02197.
   Research of Malara was partially supported by National Science Centre, Poland, grant 2016/21/N/ST1/01491.
   We would like to thank Piotr Pokora and Tomasz Szemberg for helpful conversations.
   Many thanks go to the referee for patience and valuable comments on the original draft of the paper.
%*****************************************************************************

%***************************************************************************** % Addresses

\bigskip \small

   Grzegorz Malara,
   Department of Mathematics, Pedagogical University of Cracow,
   Podchor\c a\.zych 2,
   PL-30-084 Krak\'ow, Poland.

\nopagebreak
   \textit{E-mail address:} \texttt{grzegorzmalara@gmail.com}

\bigskip
   Justyna Szpond,
   Department of Mathematics, Pedagogical University of Cracow,
   Podchor\c a\.zych 2,
   PL-30-084 Krak\'ow, Poland.

\nopagebreak
   \textit{E-mail address:} \texttt{szpond@gmail.com}

%*****************************************************************************

\end{document}